\newcommand{\ds}{\displaystyle}
\newcommand{\lam}{\lambda}
\newcommand{\tet}{\theta}
\newcommand{\abs}[1]{\left|#1\right|}
\newcommand{\bS}{\mathbb{S}}
\newtheorem{thm}{Theorem}
\newtheorem{lem}[thm]{Lemma}
\newtheorem{cor}[thm]{Corollary}
\begin{document}

\begin{center}
{\large On a function related to Chowla's cosine problem} \\[1ex]
Idris Mercer, University of Delaware, \texttt{idmercer@math.udel.edu}
\end{center}

{\bf Abstract:} There is a rich literature that concerns the minimum value
of expressions of the form
$\cos a_1\tet + \cdots + \cos a_n\tet$
(where $a_1 < \cdots < a_n$ are positive integers)
and the question of what choice of $\{a_1,\ldots,a_n\}$
maximizes that minimum.
A related problem concerns the maximum minimum
(on the unit circle) of expressions of the form
$\abs{z^{a_1}+\cdots+z^{a_n}}$.
If we define
\begin{align*}
\lam(n) & = - \sup_{a_1<\cdots<a_n}
\min_{\tet} (\cos a_1\tet + \cdots + \cos a_n\tet), \\
\mu(n) & = \sup_{a_1<\cdots<a_n}
\min_{\abs{z}=1} \abs{z^{a_1}+\cdots+z^{a_n}},
\end{align*}
then one can ask either for bounds on the functions $\lam(n)$ or $\mu(n)$,
or particular values of $\lam(n)$ or $\mu(n)$.
Other authors have found the values of $\mu(3)$ and $\mu(4)$.
In this paper, we find the values of $\lam(2)$ and $\lam(3)$. \\[1ex]
Mathematics Subject Classification: 26D05, 42A05

\section{Introduction}

We define a {\bf cosine polynomial} of {\bf length}~$n$
to be any expression of the form
$$
\cos a_1\tet + \cos a_2\tet + \cdots + \cos a_n\tet
$$
where $a_1 < \cdots < a_n$ are integers $\ge 1$,
and we define a {\bf Newman polynomial} of {\bf length}~$n$
to be any expression of the form
$$
z^{a_1} + z^{a_2} + \cdots + z^{a_n}
$$
where $a_1 < \cdots < a_n$ are integers $\ge 0$.

We are interested in the minimum value of a length~$n$
cosine polynomial, and the minimum modulus of a length~$n$
Newman polynomial on the unit circle
(we will denote the unit circle by $\bS$).
We want to maximize those minima.

We define
\begin{align*}
L(a_1,\ldots,a_n) & = \min_\tet ( \cos a_1\tet + \cdots + \cos a_n\tet ) \\
M(a_1,\ldots,a_n) & = \min_{z\in\bS} \abs{ z^{a_1} + \cdots + z^{a_n} }
\end{align*}
so $-n \le L(a_1,\ldots,a_n) \le 0$ and $0 \le M(a_1,\ldots,a_n) \le n$.
We then define
\begin{align*}
\lam(n) & = - \sup L(a_1,\ldots,a_n) \\
\mu(n) & = \sup M(a_1,\ldots,a_n)
\end{align*}
where the supremum in the definition of $\lam$ is taken over
all sets of $n$ distinct positive integers, and the supremum
in the definition of $\mu$ is taken over all sets of $n$ distinct
nonnegative integers.
Note that $\lam(n)$ and $\mu(n)$ are both nonnegative.

Both $\lam(n)$ and $\mu(n)$ are mathematically well-defined,
because we are taking the supremum of a bounded set.
However, since there are infinitely many possible sets
$\{a_1,\ldots,a_n\}$, it is not obvious how to find the value
of $\lam(n)$ or $\mu(n)$ for a particular $n$ in a finite number
of steps.

Three types of problems we might consider are:
\begin{enumerate}
\item finding upper or lower bounds for the functions
$\lam(n)$ or $\mu(n)$,
\item finding values of $\lam(n)$ or $\mu(n)$ for particular $n$,
\item showing that one can calculate particular values of
$\lam(n)$ or $\mu(n)$ in a finite number of steps.
\end{enumerate}

Even proving $\lam(n) \to \infty$ is nontrivial.
This was first done by Uchiyama and Uchiyama \cite{UU60}
using results of Cohen \cite{Coh60};
their lower bound for $\lam(n)$ was sublogarithmic.
Over the years, better lower bounds for $\lam(n)$
have been found. The best lower bound currently known
is due to Ruzsa \cite{Ruz04}; it is superlogarithmic
but grows more slowly than any power of~$n$.
The best known upper bound for $\lam(n)$ appears to be
$O(\sqrt{n})$. Chowla conjectured \cite{Cho65}
that this is the true rate of growth.

Less appears to be known about the growth rate of $\mu(n)$.
By considering the $L^2$ norm,
one can show that $\mu(n) \le \sqrt{n}$ for all~$n$,
and by considering a particular length~$9$ Newman polynomial,
one can show that $\mu(n) \ge n^{0.14}$ when $n$ is a power of~$9$.
In~\cite{Boy86},
Boyd considered the maximum minimum modulus on~$\bS$
of Newman polynomials of {\bf degree}~$n$,
but also made some conjectures about the function denoted
by~$\mu(n)$ in this paper. Specifically, he conjectured that
$\mu(n) > 1$ for all $n \ge 6$, and conjectured that
$\log \mu(n)/\log n$ approaches a limit.

Some particular values of $\mu(n)$ have been computed:
Campbell, Ferguson, and Forcade \cite{CFF83} proved that
$$
\mu(3) = \sqrt{\frac{47-14\sqrt{7}}{27}} \approx 0.607346
$$
and Goddard \cite{God92} proved that
$$
\mu(4) = \min_{-1\le x\le1}\sqrt{16x^4+8x^3-8x^2-2x+2} \approx 0.752394.
$$
The current author is unaware of any proofs in the literature
for particular values of~$\lam(n)$. The main results of this paper are proofs
that
\begin{align*}
\lam(2) & = \frac98 = 1.125000 \hspace{2em} \mbox{and} \\[1ex]
\lam(3) & = \frac{17+7\sqrt{7}}{27} \approx 1.315565.
\end{align*}
We note that one can make plausible guesses about
other values of $\lam(n)$ and $\mu(n)$ by searching
cosine polynomials or Newman polynomials of bounded degree.
The conjectured values of $\mu(5)$ and $\mu(6)$ given below
appear in~\cite{God92} and were obtained by considering
$a_n \le 30$. (There is a small
error in~\cite{God92}; the author mistakenly writes the
square of the conjectured value of~$\mu(6)$.)
The conjectured values of $\lam(4)$, $\lam(5)$, and $\lam(6)$
were obtained by the current author by considering $a_n \le 20$.
It was conjectured in~\cite{God92} that $\mu(n)$ is monotone,
but note that if our conjectured values for $\lam(5)$ and $\lam(6)$
are correct, then $\lam(n)$ is not monotone.
Perhaps $\lam(n)$ is eventually monotone.

\begin{center}
\begin{tabular}{c|l|l}
$n$ & Suspected $\lam(n)$ & $a_1,\ldots,a_n$ that attain \\
& & suspected optimal value \\ \hline
2 & 1.125000 (proved) & \hspace{2em} 1,2 \\
3 & 1.315565 (proved) & \hspace{2em} 1,2,3 \\
4 & 1.519558 & \hspace{2em} 1,2,3,4 \\
5 & 1.627461 & \hspace{2em} 1,2,4,5,6 \\
6 & 1.591832 & \hspace{2em} 1,2,4,6,7,8 \\
\end{tabular}
\end{center}

\begin{center}
\begin{tabular}{c|l|l}
$n$ & Suspected $\mu(n)$ & $a_1,\ldots,a_n$ that attain \\
& & suspected optimal value \\ \hline
3 & 0.607346 (proved) & \hspace{2em} 0,1,3 \\
4 & 0.752394 (proved) & \hspace{2em} 0,1,2,4 \\
5 & 1.000000 & \hspace{2em} 0,1,2,6,9 \\
6 & 1.065286 & \hspace{2em} 0,6,9,10,17,24 \\
\end{tabular}
\end{center}

\section{Results}

\begin{lem} \label{nine-eighths}
The minimum value of $f(\tet) = \cos\tet + \cos2\tet$
is $-9/8$, which occurs when $\cos\tet = -1/4$.
\end{lem}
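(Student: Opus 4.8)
The plan is to reduce this two-line statement to an elementary one-variable problem by substituting $x = \cos\tet$. First I would apply the double-angle identity $\cos 2\tet = 2\cos^2\tet - 1$, which rewrites $f(\tet) = \cos\tet + \cos 2\tet$ as $2x^2 + x - 1$ with $x = \cos\tet$. As $\tet$ ranges over all reals, $x$ ranges over the full interval $[-1,1]$, so it suffices to find the minimum of the quadratic $g(x) = 2x^2 + x - 1$ on $[-1,1]$.

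Next I would locate the minimum of $g$ on that interval. Since $g$ is an upward-opening parabola, it has a unique critical point, at $x = -\frac14$, and this point lies in $[-1,1]$; hence the minimum of $g$ on the interval occurs there. Evaluating gives $g(-\tfrac14) = 2\cdot\tfrac1{16} - \tfrac14 - 1 = \tfrac18 - \tfrac14 - 1 = -\tfrac98$. For completeness I would also record the endpoint values $g(-1) = 0$ and $g(1) = 2$, both of which exceed $-\tfrac98$, confirming that $-\tfrac98$ is the global minimum of $g$ on $[-1,1]$.

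Finally, translating back to $\tet$: because $x = \cos\tet$ attains every value in $[-1,1]$, the value $-\tfrac98$ is actually attained by $f$, and it is attained exactly when $\cos\tet = -\tfrac14$, as claimed. There is no genuine obstacle in this argument; the only step deserving a moment's care is checking that the critical point $x = -\tfrac14$ of the quadratic indeed lies in the admissible range $[-1,1]$ — were it outside, the minimum would sit at an endpoint — but here it comfortably does.
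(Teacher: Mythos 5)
Your proof is correct and is precisely the "elementary exercise in trigonometric identities and calculus" that the paper leaves to the reader: substitute $x=\cos\tet$ via the double-angle identity and minimize the resulting quadratic on $[-1,1]$. No issues.
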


\begin{proof}
This is an elementary exercise in trigonometric identities and calculus.
\end{proof}

\begin{lem} \label{degree3}
The minimum value of $f(\tet) = \cos\tet + \cos2\tet +\cos3\tet$ is
$$
\frac{-17-7\sqrt{7}}{27} \approx -1.315565,
$$
which occurs when $\cos\tet = (-1+\sqrt{7})/6$.
\end{lem}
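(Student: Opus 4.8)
The plan is to turn this into a single-variable polynomial minimization. Setting $x = \cos\tet$ and using the Chebyshev identities $\cos 2\tet = 2x^2 - 1$ and $\cos 3\tet = 4x^3 - 3x$, we obtain
\[
f(\tet) \;=\; 4x^3 + 2x^2 - 2x - 1 \;=:\; g(x).
\]
Since $\cos$ maps onto $[-1,1]$, minimizing $f$ over $\tet$ is the same as minimizing the cubic $g$ over the closed interval $[-1,1]$, and this is now a routine calculus problem.

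First I would locate the critical points. We have $g'(x) = 12x^2 + 4x - 2 = 2(6x^2 + 2x - 1)$, with roots $x_\pm = \frac{-1 \pm \sqrt 7}{6}$; numerically $x_- \approx -0.608$ and $x_+ \approx 0.274$, so both lie in $(-1,1)$. Because $g$ is a cubic with positive leading coefficient, it is increasing, then decreasing, then increasing, so $x_-$ is a local maximum and $x_+$ is the unique local minimum. Hence the minimum of $g$ on $[-1,1]$ is attained either at the endpoint $x = -1$ or at $x = x_+$, and since $g(-1) = -1$ it is enough to show $g(x_+) < -1$.

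The only step requiring care — really bookkeeping rather than a genuine obstacle — is evaluating $g(x_+)$ in closed form without wading through nested radicals. I would exploit the defining relation $6x_+^2 + 2x_+ - 1 = 0$, i.e.\ $x_+^2 = (1 - 2x_+)/6$, to rewrite $x_+^2$ and $x_+^3$ as linear functions of $x_+$; this collapses $g(x_+)$ to $(-14x_+ - 8)/9$, and substituting $x_+ = (-1+\sqrt 7)/6$ gives $g(x_+) = \frac{-17 - 7\sqrt 7}{27} \approx -1.316 < -1$. This identifies the global minimum of $g$ on $[-1,1]$ as $\frac{-17-7\sqrt 7}{27}$, occurring at $\cos\tet = x_+ = (-1+\sqrt 7)/6$, which is exactly the claimed statement. (As a sanity check, the same reduction gives $g(1) = 3$ and $g(x_-) = \frac{-17+7\sqrt 7}{27} \approx 0.056$, so neither the other endpoint nor the local maximum competes for the minimum.)
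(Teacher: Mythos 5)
Your proposal is correct and is exactly the route the paper takes: rewrite $f$ via the Chebyshev identities as the cubic $4c^3+2c^2-2c-1$ in $c=\cos\tet$ and minimize it over $[-1,1]$ (the paper leaves this calculus step as an "elementary exercise," which you have carried out correctly, including the clean reduction $g(x_+)=(-14x_+-8)/9$).
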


\begin{proof}
This is elementary as well, and follows from writing
$$
\cos\tet + \cos2\tet +\cos3\tet = \cos\tet + (2\cos^2\tet-1) + (4\cos^3\tet-3\cos\tet)
$$
and then minimizing $4c^3 + 2c^2 - 2c - 1$ for $-1 \le c \le 1$.
\end{proof}

The rest of this paper consists of showing that the minima
appearing in Lemmas 1 and~2 cannot be improved upon by choosing
other values of the~$a_j$.

Note that it suffices to consider the case $\gcd(a_1,\ldots,a_n) = 1$.
This is because if $d$ divides all $a_j$, then the cosine polynomials
$$
\cos a_1\tet + \cdots + \cos a_n\tet \hspace{2em} \mbox{and} \hspace{2em}
\cos \frac{a_1}{d}\tet + \cdots + \cos \frac{a_n}{d}\tet
$$
attain the same set of values.

\begin{thm} \label{two}
If $f(\tet) = \cos a_1\tet + \cos a_2\tet$ where
$a_1 < a_2$ are relatively prime positive integers
and $a_2 \ge 3$, then we have
$f(\tet) \le -3/2$
for some $\tet$.
\end{thm}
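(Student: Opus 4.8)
The plan is to produce an explicit $\tet$ at which $f$ is small, by pinning the higher-frequency term to one of its troughs and then using coprimality to force the lower-frequency term to be reasonably negative there. Take $\tet_k = (2k+1)\pi/a_2$ for an integer $k$ to be chosen. At every such point $\cos a_2\tet_k = \cos\bigl((2k+1)\pi\bigr) = -1$, so it suffices to choose $k$ with $\cos a_1\tet_k \le -\tfrac12$: this immediately gives $f(\tet_k) \le -1 - \tfrac12 = -\tfrac32$. Since $\cos a_1\tet_k = \cos\bigl(\tfrac{a_1(2k+1)}{a_2}\pi\bigr)$ and $\cos(\pi x) \le -\tfrac12$ holds exactly when $x \bmod 2$ lies in $[\tfrac23,\tfrac43]$, the task reduces to choosing $k$ so that the residue $r_k := a_1(2k+1) \bmod 2a_2$ falls in the interval $I := [\tfrac23 a_2,\, \tfrac43 a_2]$, which is a subinterval of $[0, 2a_2)$ of length $\tfrac23 a_2$.

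Next I would determine exactly which residues are available. Writing $a_1(2k+1) = 2a_1 k + a_1 \equiv a_1 + 2\,(a_1 k \bmod a_2) \pmod{2a_2}$ and using $\gcd(a_1,a_2) = 1$ to see that $a_1 k \bmod a_2$ runs through all of $\{0,1,\dots,a_2-1\}$ as $k$ does, one finds that $\{r_k\}$ equals $\{(a_1 + 2j) \bmod 2a_2 : 0 \le j \le a_2 - 1\}$ — that is, the set of \emph{all} residues modulo $2a_2$ having the same parity as $a_1$ ($a_2$ of them, spaced $2$ apart). Now every closed real interval of length $\ge 2$ contains a pair of consecutive integers, hence an integer of each parity; and $I$ has length $\tfrac23 a_2 \ge 2$ precisely when $a_2 \ge 3$. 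So under the hypothesis $a_2 \ge 3$ there is an integer in $I$ of the same parity as $a_1$, it is one of the residues $r_k$ (being in $[0,2a_2)$), and the corresponding $\tet_k$ satisfies $\cos a_1\tet_k \le -\tfrac12$. This completes the argument.

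The hypothesis $a_2 \ge 3$ is used only to make $I$ long enough to trap one of the residues $r_k$; for the excluded pair $\{1,2\}$ one has $|I| = \tfrac43 < 2$ and the construction genuinely breaks down, as it must, given Lemma~\ref{nine-eighths}. I expect the one step needing care is the verification that $\{r_k\}$ is a complete parity class modulo $2a_2$, but this is the short congruence computation above; in particular it absorbs what would otherwise be a case split on whether $a_1$ is even or odd, so no such split is needed. Beyond that there is no real obstacle, and — in contrast with Lemmas~\ref{nine-eighths} and~\ref{degree3} — the proof requires no calculus.
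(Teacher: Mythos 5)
Your argument is correct, and at the strategic level it matches the paper's: both proofs evaluate $f$ only at points $\theta=(2k+1)\pi/a_2$, where $\cos a_2\theta=-1$, and reduce the problem to showing $\cos a_1\theta\le-\tfrac12$ at one such point. Where you diverge is in how that reduction is finished. The paper splits into cases (both $a_j$ odd, handled by $\theta=\pi$; then $a_1$ odd/$a_2$ even and $a_1$ even/$a_2$ odd treated separately) and uses explicit B\'ezout manipulations to construct a specific admissible $\theta$ at which $\cos a_1\theta$ equals exactly $\cos\bigl(\tfrac{a_2-1}{a_2}\pi\bigr)\le-\tfrac12$. You instead observe that the residues $a_1(2k+1)\bmod 2a_2$ form the full parity class of $a_1$ modulo $2a_2$ (this is where $\gcd(a_1,a_2)=1$ enters), and that the target interval $[\tfrac23a_2,\tfrac43a_2]$ has length $\ge 2$ precisely when $a_2\ge3$, hence contains an integer of the required parity. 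This buys you a uniform argument with no parity case split and no B\'ezout bookkeeping, at the cost of being purely existential rather than exhibiting the exact value attained (the paper's residue $a_2-1$ is just one particular point of your interval). Your identification of the attainable residues and the length-$\ge2$ pigeonhole step are both verified correctly, so there is no gap; it is a cleaner rendering of essentially the same idea.
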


\begin{proof}
If $a_1$ and $a_2$ are both odd then $f(\pi) = -2$,
so assume one of $a_1,a_2$ is odd and the other is even.
Now observe that if
$$
\tet \in S := \bigg\{ \frac{k\pi}{a_2} \;\bigg|\; \mbox{$k$ is an odd integer} \bigg\}
$$
then $\cos a_2\tet = -1$. So it would suffice to prove
that $\cos a_1\tet \le -1/2$ for some $\tet \in S$.

Case 1.
Suppose $a_1$ is odd and $a_2$ is even.
Note that $a_1$ and $2a_2$ are relatively prime,
so we have
$$
a_1 s + 2a_2 t = 1
$$
for some integers $s$ and $t$. Note that $s$ must be odd.
We then have
\begin{align*}
a_1 s & = 1 - 2a_2 t \\
a_1(a_2\!-\!1)s & = (a_2\!-\!1) - 2a_2(a_2\!-\!1)t \\
a_1\frac{(a_2\!-\!1)s\pi}{a_2} & = \frac{a_2\!-\!1}{a_2}\pi - 2(a_2\!-\!1)t\pi \\
a_1\frac{(a_2\!-\!1)s\pi}{a_2} & = \frac{a_2\!-\!1}{a_2}\pi
- \mbox{integer multiple of $2\pi$} \\
\cos\bigg( a_1\frac{(a_2\!-\!1)s\pi}{a_2} \bigg)
& = \cos\bigg( \frac{a_2\!-\!1}{a_2}\pi \bigg)
\end{align*}
Now note that $a_2\!-\!1$ and $s$ are both odd, so $(a_2\!-\!1)s\pi/a_2 \in S$.
Note also that $a_2 \ge 3$ implies
\begin{align*}
\frac{2\pi}{3} & \le \frac{a_2\!-\!1}{a_2}\pi < \pi \\
-\frac12 & \ge \cos\bigg( \frac{a_2\!-\!1}{a_2}\pi \bigg) > -1
\end{align*}
so Case 1 is complete.

Case 2.
Suppose $a_1$ is even and $a_2$ is odd.
Since $a_1$ and $a_2$ are relatively prime,
we can write
$$
a_1 s + a_2 t = -1
$$
for some integers $s$ and $t$. Note that $t$ must be odd.
We then have
\begin{align*}
a_1 s + a_2 (t\!+\!1) & = a_2\!-\!1 \\
a_1 (s\!+\!a_2) + a_2 (t\!+\!1\!-\!a_1) & = a_2\!-\!1
\end{align*}
Note that since $a_2$ is odd, one of $s$ and $s\!+\!a_2$
must be odd.
If $s$ is odd, define
\begin{align*}
s' & = s \\
t' & = t\!+\!1
\end{align*}
and if $s\!+\!a_2$ is odd, define
\begin{align*}
s' & = s\!+\!a_2 \\
t' & = t\!+\!1\!-\!a_1
\end{align*}
Then $s'$ is odd, $t'$ is even, and we have
\begin{align*}
a_1 s' + a_2 t' & = a_2\!-\!1 \\
a_1 \frac{s'\pi}{a_2} + t'\pi & = \frac{a_2\!-\!1}{a_2}\pi \\
a_1 \frac{s'\pi}{a_2} & = \frac{a_2\!-\!1}{a_2}\pi - \mbox{integer multiple of $2\pi$} \\
\cos\bigg( a_1 \frac{s'\pi}{a_2} \bigg)
& = \cos\bigg( \frac{a_2\!-\!1}{a_2}\pi \bigg)
\end{align*}
Note that $s'\pi/a_2 \in S$, and as before, $\cos((a_2\!-\!1)\pi/a_2) \le -1/2$.
So Case 2 is complete.
\end{proof}

The following is a consequence of Lemma~\ref{nine-eighths}
and Theorem~\ref{two}.

\begin{cor}
We have $\lam(2) = 9/8 = 1.125$.
\end{cor}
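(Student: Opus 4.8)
The plan is to assemble Lemma~\ref{nine-eighths}, Theorem~\ref{two}, and the $\gcd$ reduction remark preceding Theorem~\ref{two}. Since $\lam(2) = -\sup L(a_1,a_2)$ with the supremum over pairs of distinct positive integers, proving the corollary amounts to showing that $\sup L(a_1,a_2) = -9/8$. I would split this into an easy lower bound on the supremum (exhibit one good pair) and an upper bound (bound $L$ for every pair).

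For the lower bound, I would simply take $(a_1,a_2) = (1,2)$. By Lemma~\ref{nine-eighths}, $L(1,2) = -9/8$, so $\sup L(a_1,a_2) \ge -9/8$, which already gives $\lam(2) \le 9/8$.

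For the matching upper bound, fix an arbitrary pair of positive integers $a_1 < a_2$ and let $d = \gcd(a_1,a_2)$. By the remark before Theorem~\ref{two}, the cosine polynomials attached to $(a_1,a_2)$ and to $(a_1/d,a_2/d)$ take the same set of values, so $L(a_1,a_2) = L(a_1/d,a_2/d)$ and we may assume $\gcd(a_1,a_2) = 1$. Since $1 \le a_1 < a_2$, there are only two possibilities: either $(a_1,a_2) = (1,2)$, in which case $L(a_1,a_2) = -9/8$ by Lemma~\ref{nine-eighths}; or $a_2 \ge 3$, in which case Theorem~\ref{two} produces a $\tet$ with $\cos a_1\tet + \cos a_2\tet \le -3/2$, so $L(a_1,a_2) \le -3/2$. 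Because $-3/2 < -9/8$, in both cases $L(a_1,a_2) \le -9/8$. Hence $\sup L(a_1,a_2) = -9/8$, the supremum is attained (uniquely) at $(1,2)$, and $\lam(2) = 9/8$.

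There is essentially no remaining obstacle: all of the substance lives in Lemma~\ref{nine-eighths} and Theorem~\ref{two}. The only points requiring a little care are that the reduction to $\gcd(a_1,a_2)=1$ is legitimate because it preserves $L$ exactly, and that the trivial bound $a_2 \ge 2$ together with coprimality really does leave only the two cases $(1,2)$ and $a_2 \ge 3$, so that the numerical inequality $-3/2 < -9/8$ closes the argument and identifies $1,2$ as the optimal choice.
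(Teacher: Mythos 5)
Your argument is correct and is exactly the assembly the paper intends: the pair $(1,2)$ attains $-9/8$ by Lemma~\ref{nine-eighths}, the $\gcd$ reduction plus the observation that the only coprime pair with $a_2\le 2$ is $(1,2)$ leaves only the case $a_2\ge 3$, which Theorem~\ref{two} bounds by $-3/2<-9/8$. One minor quibble: the parenthetical claim that the supremum is attained \emph{uniquely} at $(1,2)$ is false (any pair $(d,2d)$ gives $L=-9/8$ by the same rescaling), but this is immaterial to the corollary.
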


Next, we give some lemmas
that will be helpful for evaluating $\lam(3)$.

\begin{lem} \label{cosinesum}
Suppose $\xi$ is a real number, $k \ge 2$ is an integer,
and $m$ is an integer. If $m$ is not a multiple of~$k$, then
$$
\sum_{j=0}^{k-1} \cos\Big( \xi + \frac{2\pi m j}{k} \Big) = 0.
$$
\end{lem}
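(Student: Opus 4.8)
The plan is to recognize the sum as a geometric-series identity in disguise. Writing each cosine as the real part of a complex exponential,
\[
\sum_{j=0}^{k-1} \cos\Big( \xi + \frac{2\pi m j}{k} \Big)
= \operatorname{Re} \sum_{j=0}^{k-1} e^{i\xi} \big(e^{2\pi i m/k}\big)^j
= \operatorname{Re}\, \bigg( e^{i\xi} \sum_{j=0}^{k-1} \omega^j \bigg),
\]
where $\omega = e^{2\pi i m/k}$. The inner sum is a finite geometric series with ratio $\omega$, and the whole claim reduces to showing $\sum_{j=0}^{k-1}\omega^j = 0$.

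The key point is the hypothesis that $m$ is not a multiple of $k$, which guarantees $\omega \ne 1$: if $\omega = e^{2\pi i m/k} = 1$ then $m/k$ would be an integer, contrary to assumption. (This is also where $k \ge 2$ matters, since for $k = 1$ every $m$ is a multiple of $k$ and the statement would be vacuous or false.) With $\omega \ne 1$, the geometric sum formula gives
\[
\sum_{j=0}^{k-1} \omega^j = \frac{\omega^k - 1}{\omega - 1}.
\]
Then I would observe that $\omega^k = e^{2\pi i m} = 1$ because $m$ is an integer, so the numerator vanishes while the denominator does not, and the sum is $0$. Taking real parts, the original cosine sum is $\operatorname{Re}(e^{i\xi}\cdot 0) = 0$.

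There is essentially no obstacle here; the only thing to be careful about is justifying $\omega \ne 1$ cleanly so that division by $\omega - 1$ is legitimate, and noting that $\omega^k = 1$ uses only that $m \in \mathbb{Z}$ (not the non-divisibility hypothesis). As an alternative one could group the cosines via the product-to-sum identity or invoke the fact that the $k$-th roots of unity sum to zero, but the direct geometric-series computation is the cleanest and most self-contained, so that is the route I would write up.
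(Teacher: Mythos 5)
Your proposal is correct and follows essentially the same route as the paper, which also writes the sum as the real part of $\sum_{j=0}^{k-1}\exp\big(i(\xi+2\pi mj/k)\big)$ and appeals to the vanishing of that geometric series; you have simply spelled out the details (the condition $\omega\neq 1$ and $\omega^k=1$) that the paper leaves implicit.
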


\begin{proof}
This follows from the fact that the above sum is the real part of
$$
\sum_{j=0}^{k-1} \exp \bigg( i \Big( \xi + \frac{2\pi m j}{k} \Big) \bigg).
$$
\end{proof}

\begin{lem} \label{variance}
Let $y_0, \ldots, y_{N-1}$ be real numbers satisfying
$\sum_{j=0}^{N-1} y_j = 0$.
Suppose $M > 0$ is a real number such that
$y_j \le M$ for all~$j$
(so not all $y_j$ are equal to~$M$).
If we have
$$
\frac1N \sum_{j=0}^{N-1} y_j^2 \ge KM
$$
for some real number $K$
(we will take $K>0$), then we have $y_j \le -K$ for some~$j$.
\end{lem}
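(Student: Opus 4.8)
The plan is to argue by contradiction: assume $y_j > -K$ for every $j$ and derive a contradiction with the hypothesis $\frac1N\sum y_j^2 \ge KM$. The idea is to exploit the two-sided control we have on the $y_j$ — the given upper bound $M$ and, under the contradiction hypothesis, the lower bound $-K$ — by summing, over all $j$, the product of the two nonnegative quantities $M - y_j$ and $y_j + K$.

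First I would observe that under the assumption $y_j > -K$ the factor $M - y_j$ is $\ge 0$ and the factor $y_j + K$ is $> 0$, so $\sum_{j=0}^{N-1}(M-y_j)(y_j+K) \ge 0$. Then I would expand this sum, using $\sum_{j=0}^{N-1} y_j = 0$ to eliminate the linear terms:
$$
\sum_{j=0}^{N-1}(M-y_j)(y_j+K)
= MKN + (M-K)\sum_{j=0}^{N-1} y_j - \sum_{j=0}^{N-1} y_j^2
= MKN - \sum_{j=0}^{N-1} y_j^2,
$$
so that $\frac1N\sum_{j=0}^{N-1} y_j^2 \le MK$. Comparing with the hypothesis $\frac1N\sum_{j=0}^{N-1} y_j^2 \ge KM$ forces equality throughout, hence every summand $(M-y_j)(y_j+K)$ vanishes; since $y_j + K > 0$, this means $y_j = M$ for every $j$. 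But then $\sum_{j=0}^{N-1} y_j = NM > 0$, contradicting $\sum_{j=0}^{N-1} y_j = 0$. Therefore $y_j \le -K$ for some $j$.

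I do not expect a genuine obstacle here: the only nonroutine step is choosing the quantity $(M-y_j)(y_j+K)$ to sum, and once it is written down the argument is elementary algebra together with the mean-zero hypothesis. The parenthetical remark in the statement that not all $y_j$ equal $M$ is automatic (since $M>0$ and the $y_j$ sum to zero), and it is precisely what excludes the degenerate equality case encountered above.
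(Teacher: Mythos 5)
Your proof is correct and rests on exactly the same key idea as the paper's: assuming $y_j > -K$ for all $j$, summing the products $(M-y_j)(y_j+K)$, and expanding using $\sum_j y_j = 0$. The only difference is cosmetic — the paper obtains a strict inequality $KM > \frac1N\sum_j y_j^2$ immediately from the observation that not all $y_j$ equal $M$, whereas you start from a weak inequality and eliminate the equality case afterwards (correctly noting that ``not all $y_j = M$'' is automatic from $M>0$ and the mean-zero hypothesis); both versions are fine.
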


\begin{proof}
We use contraposition. Suppose $y_j > -K$ for all~$j$.
Then $y_j + K$ is always positive. Now note that $M - y_j$
is always nonnegative, and is sometimes strictly positive.
We therefore have
\begin{align*}
\frac1N \sum_{j=0}^{N-1} (M - y_j)(y_j + K) & > 0 \\
\frac1N \sum_{j=0}^{N-1} \Big(-y_j^2 +(M-K)y_j +KM\Big) & > 0 \\
-\frac1N \sum_{j=0}^{N-1} y_j^2 + (M-K)0 + KM & > 0 \\
KM & > \frac1N \sum_{j=0}^{N-1} y_j^2.
\end{align*}
\end{proof}

\begin{thm} \label{three}
Suppose $f(\tet) = \cos a_1\tet + \cos a_2\tet + \cos a_3\tet$
where $a_1 < a_2 < a_3$ are positive integers satisfying
$\gcd(a_1,a_2,a_3)=1$. Then for some $\tet$, we have
$$
f(\tet) \le \frac{-17-7\sqrt{7}}{27} \approx -1.315565.
$$
\end{thm}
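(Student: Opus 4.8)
The plan is to keep the reduction to $\gcd(a_1,a_2,a_3)=1$ already made, and then to evaluate $f$ on the set
\[
S \;=\; \Big\{\, \tfrac{(2m+1)\pi}{a_3} \;:\; m=0,1,\dots,a_3-1 \,\Big\},
\]
on which $\cos a_3\tet=-1$ identically, so that what has to be pushed down is the tail $g(\tet):=\cos a_1\tet+\cos a_2\tet$ on $S$. A short computation with Lemma~\ref{cosinesum} shows that the average of $g$ over $S$ is $0$, and that the average of $g^2$ over $S$ equals $1$ \emph{unless} one of the coincidences $a_3=2a_1$, $a_3=2a_2$, $a_3=a_1+a_2$ occurs — these are the only ways a frequency among $2a_1,\,2a_2,\,a_1+a_2$ can be a multiple of $a_3$ (the remaining relevant frequency $a_2-a_1$ lies strictly between $0$ and $a_3$). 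When none of them holds, Lemma~\ref{variance} with $N=a_3$, $M=2$ gives $g(\tet)\le-\tfrac12$ for some $\tet\in S$, hence $f(\tet)\le-\tfrac12-1=-\tfrac32$, well below the target.

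Next I would dispose of the two easy coincidences. If $a_3=2a_1$ then $\cos a_1\tet=\cos\tfrac{(2m+1)\pi}{2}=0$ on all of $S$, so on $S$ we have $f(\tet)=\cos a_2\tet-1$; since $\gcd(a_1,a_2)=1$ and $a_1\ge2$ (because $a_1<a_2<2a_1$) one gets $a_1\nmid a_2$, hence $a_3\nmid 2a_2$, so by Lemma~\ref{cosinesum} the average of $\cos^2a_2\tet$ over $S$ is $\tfrac12$, and Lemma~\ref{variance} with $M=1$ gives $\cos a_2\tet\le-\tfrac12$ somewhere on $S$, i.e.\ $f(\tet)\le-\tfrac32$. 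The case $a_3=2a_2$ is identical with the roles of $a_1$ and $a_2$ interchanged ($\cos a_2\tet\equiv0$ on $S$, and one uses $a_2\nmid a_1$).

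The delicate case is $a_3=a_1+a_2$, which contains the extremal triple $1,2,3$: here the average of $g^2$ over $S$ turns out to be $0$, so $g\equiv0$ on $S$ and the sampling argument yields only $f\le-1$. Instead I would use
\[
f(\tet)=2\cos^2\tfrac{a_3\tet}{2}+2\cos\tfrac{a_3\tet}{2}\cos\tfrac{(a_2-a_1)\tet}{2}-1,
\]
which follows from the sum-to-product and double-angle formulas. Put $d=\gcd(a_3,\,a_2-a_1)$; since $\gcd(a_1,a_2)=1$ we have $d\mid2$, and writing $\alpha=a_3/d$, $\epsilon=(a_2-a_1)/d$ (so $\gcd(\alpha,\epsilon)=1$) and $\tet=2u/d$, the task becomes to find $u$ with $2\cos^2\alpha u+2\cos\alpha u\cos\epsilon u-1\le\frac{-17-7\sqrt7}{27}$. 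Because $\gcd(\alpha,\epsilon)=1$, once $\alpha u\equiv\beta\pmod{2\pi}$ is prescribed the corresponding values of $\epsilon u$ form an arithmetic progression of step $2\pi/\alpha$ around the circle, so some solution has $\epsilon u$ within $\pi/\alpha$ of any chosen angle. Taking $\beta=\arccos p_0$ and the solution whose $\epsilon u$ lies within $\pi/\alpha$ of $\pi$ gives $\cos\alpha u=p_0$ and $\cos\epsilon u\le-\cos(\pi/\alpha)$, hence $f\le 2p_0^2-2p_0\cos(\pi/\alpha)-1$; choosing $p_0=\tfrac12\cos(\pi/\alpha)$ makes the right-hand side $-\tfrac12\cos^2(\pi/\alpha)-1$, and an elementary inequality ($\cos^2(\pi/\alpha)\ge\frac{14\sqrt7-20}{27}$) shows this is $\le\frac{-17-7\sqrt7}{27}$ as soon as $\alpha\ge5$. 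That leaves only the triples with $\alpha\le4$, namely $1,2,3$ together with $1,3,4$; $1,5,6$; $1,7,8$; $3,5,8$; these are treated directly — $1,2,3$ is exactly Lemma~\ref{degree3}, and for each of the other four one exhibits one good value of $\tet$ (for instance $\tet=\pi/6$ gives $f=-\tfrac12-\tfrac{\sqrt3}{2}\approx-1.366$ for $3,5,8$) or minimizes the resulting polynomial in $\cos\tet$ of degree at most $8$ on $[-1,1]$.

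The step I expect to be the main obstacle is precisely the case $a_3=a_1+a_2$: the straightforward sampling on $S$ is powerless there, the target is attained (at $1,2,3$) so the argument can afford essentially no slack, and one must combine the product reformulation with the Dirichlet-type spacing argument and a careful finite check.
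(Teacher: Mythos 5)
Your proposal is correct, and while your main case coincides with the paper's, you handle the two exceptional cases by genuinely different means. For the generic case ($a_3\notin\{2a_1,2a_2,a_1+a_2\}$) you do exactly what the paper does: sample at odd multiples of $\pi/a_3$, compute mean $0$ and mean square $1$ via Lemma~\ref{cosinesum}, and apply Lemma~\ref{variance} to get $f\le-3/2$. For $a_3=2a_1$ or $a_3=2a_2$ you stay on the same sampling set and exploit the fact that one of the cosines vanishes identically there, reducing to a one-term variance argument; the paper instead samples where $\cos a\tet=-1/4$ so that $\cos a\tet+\cos2a\tet=-9/8$ exactly, and runs the variance lemma on the leftover term. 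Both yield bounds well below the target ($-3/2$ versus $-13/8$), and your version has the small advantage of not needing the paper's separate hand-check of the sub-case $a=2$. The real divergence is at $a_3=a_1+a_2$, the tight case: the paper simply quotes the Campbell--Ferguson--Forcade evaluation of $\mu(3)$, whose square is exactly $3+2\cdot\frac{-17-7\sqrt7}{27}$, so the required constant drops out in one line for every such triple. You replace this by a self-contained argument: the identity $f=2\cos^2\frac{a_3\tet}{2}+2\cos\frac{a_3\tet}{2}\cos\frac{(a_2-a_1)\tet}{2}-1$, the observation that $d=\gcd(a_3,a_2-a_1)\mid2$, a pigeonhole/spacing argument on the torus, and a finite check. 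I verified that your list of exceptional triples ($1,2,3$; $1,3,4$; $1,5,6$; $1,7,8$; $3,5,8$) is complete and that each admits an easy witness (e.g.\ $\tet=2\pi/3$ gives $f=-3/2$ for $1,7,8$, and $\tet=13\pi/9$ gives $f\approx-1.44$ for $1,5,6$), so although you only carried out one of these checks explicitly, the argument does close. What your route buys is independence from \cite{CFF83}; what it costs is the pigeonhole analysis and the five-triple verification, whereas the paper's appeal to $\mu(3)$ handles the entire case, slack-free, at once.
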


\begin{proof}
We split the proof into three cases:
\begin{itemize}
\item Case 1: $a_3 = a_1 + a_2$,
\item Case 2: $a_3 = 2a_1$ or $a_3 = 2a_2$,
\item Case 3: $a_3 \notin \{ a_1+a_2, 2a_1, 2a_2 \}$.
\end{itemize}

Case 1.
Note that if $z = e^{i\tet} \in \bS$, we have
\begin{align*}
\abs{1+z^{a_1}+z^{a_1+a_2}}^2
& = (1+z^{a_1}+z^{a_1+a_2})(1+z^{-a_1}+z^{-a_1-a_2}) \\
& = 3 + 2\big(\cos a_1\tet + \cos a_2\tet + \cos(a_1\!+\!a_2)\tet\big).
\end{align*}
Since $1+z^{a_1}+z^{a_1+a_2}$ is a Newman polynomial
of length~$3$, we have
\begin{align*}
\abs{1+z^{a_1}+z^{a_1+a_2}} \le \mu(3) = \sqrt{\frac{47-14\sqrt{7}}{27}}
\end{align*}
for some $z = e^{i\tet} \in \bS$, by Theorem 2 in \cite{CFF83}.
Therefore for some $\tet$, we have
\begin{align*}
3 + 2\big(\cos a_1\tet + \cos a_2\tet + \cos(a_1\!+\!a_2)\tet\big)
& \le \frac{47-14\sqrt{7}}{27} \\[1ex]
2\big(\cos a_1\tet + \cos a_2\tet + \cos(a_1\!+\!a_2)\tet\big)
& \le \frac{-34-14\sqrt{7}}{27} \\[1ex]
\cos a_1\tet + \cos a_2\tet + \cos(a_1\!+\!a_2)\tet
& \le \frac{-17-7\sqrt{7}}{27}.
\end{align*}
This completes Case 1.

Case 2.
If $a_3=2a_1$, define
$$
a=a_1, \hspace{2em} b=a_2, \hspace{2em} c=a_3
$$
and if $a_3 = 2a_2$, define
$$
a=a_2, \hspace{2em} b=a_1, \hspace{2em} c=a_3.
$$
So we have
$$
f(\tet) = \cos a\tet + \cos b\tet + \cos 2a\tet
$$
where either
\begin{equation} \label{case2ineqs}
1 \le a < b < 2a \hspace{2em} \mbox{or} \hspace{2em} 1 \le b < a < 2a.
\end{equation}
So $a \ge 2$.

If $a=2$, the only possibilities for $f$ are
\begin{align*}
f(\tet) & = \cos 1\tet + \cos 2\tet + \cos 4\tet, \\
f(\tet) & = \cos 2\tet + \cos 3\tet + \cos 4\tet.
\end{align*}
We dispose of those possibilities by observing
\begin{align*}
\cos \Big(1\cdot\frac{2\pi}{3}\Big) + \cos \Big(2\cdot\frac{2\pi}{3}\Big)
+ \cos \Big(4\cdot\frac{2\pi}{3}\Big) & = -\frac32 < -1.315565, \\[1ex]
\cos \Big(2\cdot\frac{\pi}{3}\Big) + \cos \Big(3\cdot\frac{\pi}{3}\Big)
+ \cos \Big(4\cdot\frac{\pi}{3}\Big) & = -2 < -1.315565.
\end{align*}
So for the rest of Case 2, we assume $a > 2$.

We note from Lemma \ref{nine-eighths}
that $\cos a\tet + \cos 2a\tet$ attains its
minimum value of $-9/8$ when $\cos a\tet = -1/4$. Define
$$
\xi = \arccos\Big(-\frac14\Big) \approx 1.823477
$$
and further define
$$
\tet_j = \frac{\xi}{a} + \frac{2\pi j}{a} \hspace{2em}
\mbox{for $j = 0,1,\ldots,a-1$.}
$$
Then $\cos a\tet_j + \cos 2a\tet_j = -9/8$ for each $j$.
We claim that $\cos b\tet_j \le -1/2$ for some~$j$,
implying that $f(\tet_j) \le -13/8 = -1.625$,
which will take care of Case~2.

We will use Lemma \ref{variance}. We choose
$$
y_j = \cos b\tet_j \hspace{2em} \mbox{for $j=0,1,\ldots,a-1$}.
$$
We need to prove $\sum_j y_j =0$. We can take $M=1$,
and we will show that $\frac1a \sum_j y_j^2 = \frac12$.
The claim will then follow.

Note that
$$
\sum_{j=0}^{a-1} y_j
= \sum_{j=0}^{a-1} \cos \Big( \frac{b\xi}{a} + \frac{2\pi b j}{a} \Big).
$$
This is 0 by Lemma \ref{cosinesum}, because $b$ is not a multiple of~$a$
(since $b < 2a$ and $b \ne a$). Now consider
\begin{align*}
\frac1a \sum_{j=0}^{a-1} y_j^2
& = \frac1a \sum_{j=0}^{a-1}
\cos^2 \Big( \frac{b\xi}{a} + \frac{2\pi b j}{a} \Big) \\[1ex]
& = \frac1a \sum_{j=0}^{a-1}
\bigg( \frac12 + \frac12 \cos\Big( \frac{2b\xi}{a} + \frac{2\pi 2b j}{a} \Big) \bigg) \\[1ex]
& = \frac12
+ \frac1{2a} \sum_{j=0}^{a-1} \cos\Big( \frac{2b\xi}{a} + \frac{2\pi 2b j}{a} \Big)
\end{align*}
which, by Lemma \ref{cosinesum}, is equal to $\frac12 + 0$
if we can show $2b$ is not a multiple of~$a$.
From~(\ref{case2ineqs}), note that we have
$$
2a < 2b < 4a \hspace{2em} \mbox{or} \hspace{2em} 2b < 2a
$$
so if $2b$ is a multiple of~$a$, we have $2b = 3a$ or $2b = a$.
This implies $a$ is even, so say $a = 2k$. Since $a>2$, we have
$k>1$. Note that either $b=3k$ or $b=k$. Thus $k>1$ divides all
elements of $\{a,b,2a\}=\{a_1,a_2,a_3\}$, which is a contradiction.
Therefore, as stated, $2b$ is not a multiple of~$a$.
This completes the verification of the claim and thus completes Case~2.

Case 3.
Assume $a_3 \notin \{a_1+a_2,2a_1,2a_2\}$. We define
$$
\tet_j = \frac{\pi}{a_3} + \frac{2\pi j}{a_3} \hspace{2em}
\mbox{for $j = 0,1,\ldots,a_3-1$}
$$
so $\cos a_3 \tet_j = -1$ for each $j$.
We claim that $\cos a_1 \tet_j + \cos a_2 \tet_j \le -1/2$ for some~$j$,
implying that $f(\tet_j) \le -3/2 = -1.5$, which will take care of Case~3.

We will use Lemma \ref{variance}. We choose
$$
y_j = \cos a_1 \tet_j + \cos a_2 \tet_j \hspace{2em} \mbox{for $j=0,1,\ldots,a_3-1$}.
$$
We need to prove $\sum_j y_j = 0$.
We can take $M=2$, and we will show that
$\frac1{a_3} \sum_j y_j^2 = 1$, so we can take $K=1/2$.
The claim will then follow.

Note that
$$
\sum_{j=0}^{a_3-1} y_j
= \sum_{j=0}^{a_3-1}
\bigg( \cos \Big( \frac{a_1\pi}{a_3} + \frac{2\pi a_1j}{a_3} \Big)
+ \cos \Big( \frac{a_2\pi}{a_3} + \frac{2\pi a_2j}{a_3} \Big) \bigg)
$$
which is 0 by Lemma \ref{cosinesum}, since neither $a_1$
nor $a_2$ is a multiple of $a_3$.
Now consider
\begin{align*}
& \; \frac1{a_3} \sum_{j=0}^{a_3-1} y_j^2 \\[1ex]
= & \; \frac1{a_3} \sum_{j=0}^{a_3-1}
\bigg( \cos \Big( \frac{a_1\pi}{a_3} + \frac{2\pi a_1j}{a_3} \Big)
+ \cos \Big( \frac{a_2\pi}{a_3} + \frac{2\pi a_2j}{a_3} \Big) \bigg)^2 \\[1ex]
= & \; \frac1{a_3} \sum_{j=0}^{a_3-1}
\cos^2 \Big( \frac{a_1\pi}{a_3} + \frac{2\pi a_1j}{a_3} \Big) \\[1ex]
& {} + \frac1{a_3} \sum_{j=0}^{a_3-1}
2\cos \Big( \frac{a_1\pi}{a_3} + \frac{2\pi a_1j}{a_3} \Big)
\cos \Big( \frac{a_2\pi}{a_3} + \frac{2\pi a_2j}{a_3} \Big) \\[1ex]
& {} + \frac1{a_3} \sum_{j=0}^{a_3-1}
\cos^2 \Big( \frac{a_2\pi}{a_3} + \frac{2\pi a_2j}{a_3} \Big) \\[1ex]
= & \; \frac1{a_3} \sum_{j=0}^{a_3-1} \bigg( \frac12 + \frac12
\cos \Big( \frac{2a_1\pi}{a_3} + \frac{2\pi 2a_1j}{a_3} \Big)
\bigg) \\[1ex]
& {} + \frac1{a_3} \sum_{j=0}^{a_3-1} \bigg(
\cos \Big( \frac{(a_2\!-\!a_1)\pi}{a_3}\!+\!\frac{2\pi(a_2\!-\!a_1)j}{a_3} \Big)
+ \cos \Big( \frac{(a_2\!+\!a_1)\pi}{a_3}\!+\!\frac{2\pi(a_2\!+\!a_1)j}{a_3} \Big)
\bigg) \\[1ex]
& {} + \frac1{a_3} \sum_{j=0}^{a_3-1} \bigg( \frac12 + \frac12
\cos \Big( \frac{2a_2\pi}{a_3} + \frac{2\pi 2a_2j}{a_3} \Big)
\bigg).
\end{align*}
By Lemma \ref{cosinesum}, this is equal to $\frac12+\frac12=1$
if we can show that none of the numbers
$2a_1, a_2\!-\!a_1, a_2\!+\!a_1, 2a_2$
is a multiple of~$a_3$. This follows because
\begin{itemize}
\item $0 < 2a_1 < 2a_3$ and $2a_1 \ne a_3$,
\item $0 < a_2\!-\!a_1 < a_3$,
\item $0 < a_2\!+\!a_1 < 2a_3$ and $a_2\!+\!a_1 \ne a_3$,
\item $0 < 2a_2 < 2a_3$ and $2a_2 \ne a_3$.
\end{itemize}
This completes the verification of the claim and thus completes Case~3.
\end{proof}

The following is a consequence of Lemma~\ref{degree3}
and Theorem~\ref{three}.

\begin{cor}
We have $\lam(3) = \ds\frac{17+7\sqrt{7}}{27} \approx 1.315565$.
\end{cor}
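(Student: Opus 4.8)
The plan is to read off $\lam(3)$ from both sides by combining Lemma~\ref{degree3} (the exact minimum of a particular length~$3$ cosine polynomial) with Theorem~\ref{three} (a universal upper bound on the minimum of an arbitrary one). Throughout, write $\lam(3) = -\sup L(a_1,a_2,a_3)$, where the supremum runs over all triples of distinct positive integers, and abbreviate the target value as $B = \frac{17+7\sqrt7}{27}$, so that $-B = \frac{-17-7\sqrt7}{27}$.

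First I would establish $\lam(3) \le B$. For this it suffices to exhibit a single triple $(a_1,a_2,a_3)$ with $L(a_1,a_2,a_3) \ge -B$, since then $\sup L \ge -B$ and hence $\lam(3) = -\sup L \le B$. Lemma~\ref{degree3} does exactly this: it says the minimum of $\cos\tet + \cos2\tet + \cos3\tet$ is $-B$, i.e.\ $L(1,2,3) = -B$, so the bound follows at once.

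Next I would establish the reverse inequality $\lam(3) \ge B$, which amounts to showing $L(a_1,a_2,a_3) \le -B$ for \emph{every} triple of distinct positive integers $a_1 < a_2 < a_3$, so that $\sup L \le -B$. If $\gcd(a_1,a_2,a_3) = 1$, this is precisely Theorem~\ref{three}, which produces a $\tet$ with $f(\tet) \le -B$ and hence $L(a_1,a_2,a_3) = \min_\tet f(\tet) \le -B$. If $d = \gcd(a_1,a_2,a_3) > 1$, then by the remark preceding Theorem~\ref{two} the cosine polynomials with frequencies $a_j$ and with frequencies $a_j/d$ assume the same set of values, so $L(a_1,a_2,a_3) = L(a_1/d,a_2/d,a_3/d)$, and the reduced triple is coprime, so Theorem~\ref{three} applies to it and again gives $L(a_1,a_2,a_3) \le -B$. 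Hence $\sup L \le -B$, i.e.\ $\lam(3) \ge B$.

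Combining the two inequalities yields $\lam(3) = B = \frac{17+7\sqrt7}{27} \approx 1.315565$. There is no genuine obstacle remaining at this stage: all the substantive work already lives in Lemma~\ref{degree3} (the exact evaluation via minimizing $4c^3+2c^2-2c-1$ on $[-1,1]$) and in Theorem~\ref{three} (the three-case analysis invoking $\mu(3)$ in Case~1 and Lemma~\ref{variance} in Cases~2 and~3). The only points demanding care are bookkeeping ones: respecting the sign and supremum conventions in the definition of $\lam$, and applying the $\gcd$ reduction so that Theorem~\ref{three}, stated only for coprime triples, covers all triples. The proof of the companion fact $\lam(2)=9/8$ proceeds identically, using Lemma~\ref{nine-eighths} in place of Lemma~\ref{degree3} and Theorem~\ref{two} in place of Theorem~\ref{three} (noting $-3/2 \le -9/8$).
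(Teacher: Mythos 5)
Your proposal is correct and follows exactly the paper's intended argument: Lemma~\ref{degree3} supplies the triple $(1,2,3)$ giving $\lam(3) \le \frac{17+7\sqrt7}{27}$, while Theorem~\ref{three} together with the $\gcd$ reduction remark preceding Theorem~\ref{two} gives the matching lower bound. The paper states the corollary as an immediate consequence of these two results, and your write-up simply makes that bookkeeping explicit.
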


We remark in closing that it would be interesting if
the evaluation of $\lam(4)$ or $\lam(5)$ or $\lam(6)$
can somehow be reduced to a finite search (even an impractically
large finite search). Very roughly speaking, cosine polynomials
of large degree have many local minima and do not appear to be
good candidates for high minima.


{Idris Mercer, Department of Mathematical Sciences,
University of Delaware, Newark DE, 19716,
\texttt{idmercer@math.udel.edu}}

\end{document}